\newcommand{\R}{\mathbb{R}}
\newcommand{\Z}{\mathbb{Z}}
\newcommand{\C}{\mathbb{C}}
\newcommand{\D}{\mathbb{D}}
\newcommand{\Ham}{\operatorname{Ham}}
\newcommand{\lk}{\operatorname{lk}}
\DeclarePairedDelimiter{\norm}{\lVert}{\rVert}
\DeclarePairedDelimiter{\abs}{\lvert}{\rvert}
\newtheorem{theorem}{Theorem}
\newtheorem*{question*}{Question}
\newtheorem{definition}[theorem]{Definition}
\newtheorem{prop}[theorem]{Proposition}
\newtheorem*{lemma*}{Lemma}
\newtheorem*{theorem*}{Theorem}
\newtheorem*{remark*}{Remark}
\newtheorem*{definition*}{Definition}
\newtheorem{remark}[theorem]{Remark}
\theoremstyle{remark}
\newtheorem*{remarks*}{Remarks}
\theoremstyle{definition}
\newtheorem*{claim*}{Claim}
\newtheorem*{example*}{Examples}
\title{Hofer Distance on Lagrangian Links inside the Disc}
\author{Ibrahim Trifa}
\begin{document}

\begin{abstract}
    We show that the set of Hamiltonian isotopies of certain unions of circles inside the disc is unbounded for the Hofer distance. The proof relies on a result by Francesco Morabito \cite{M} together with a standard argument of Michael Khanevsky \cite{K11}.
\end{abstract}

\maketitle

\section{Introduction}

Let $(M,\omega)$ be a symplectic manifold, and $L_0$ be a Lagrangian submanifold. Consider the set $\mathcal L(L_0,M,\omega):= \{\varphi(L_0),\varphi\in\Ham_c(M,\omega)\}$, where $\Ham_c(M,\omega)$ denotes the group of Hamiltonian diffeomorphisms that are compactly supported in the interior of $M$.
This set can be equipped with the Hofer distance:
$$d_H(L,L')=\inf\{\norm{\varphi}|\varphi\in\Ham_c(M,\omega),\varphi(L) = L'\}$$
where $\norm{\varphi}$ denotes the usual Hofer norm of a Hamiltonian diffeomorphism, defined in Section \ref{sec:Khanevsky}.
We know very little about this distance. An important question is whether it is unbounded or not. For instance when $L_0$ is the equator of the two-sphere, this has been an open question for more than thirty years (\cite[Problem 32]{MS}). It is also open when $L_0$ is a circle inside the unit disc, which is what motivated us to study unions of circle inside the disc.

One can show that the Hofer distance is bounded when $L_0$ is the unit circle inside the plane $\R^2$ equipped with its standard symplectic form. Indeed, for any $L'\in \mathcal L(L_0,\R^2,\omega_{std})$, we can find $L''\in \mathcal L$ disjoint from both $L$ and $L'$. Then we get that $d_H(L,L')\leq d_H(L,L'')+d_H(L'',L')\leq 2\pi$, since it only costs the area of a disc to displace it to a disjoint one of the same area.
In some other cases, the Hofer distance is unbounded: for instance Khanevsky proved it when $L_0$ is a diameter inside the unit disc in $\R^2$ \cite{K09}, a non-contractible circle inside the cylinder $S^1\times[0,1]$, or a non-displaceable contractible circle inside this same cylinder \cite{K11}. Khanevsky proved all those results using the same strategy that we will explain in Section \ref{sec:Khanevsky}. It relies on the existence of some quasimorphisms on the group of Hamiltonian diffeomorphisms, which in his case come from Entov and Polterovich's construction \cite{EP}. In 2013, Sobhan Seyfaddini generalised Khanevsky's first result to the case of the standard Lagrangian in a Euclidian ball of any even dimension \cite{S}.
As far as we know, this distance has never been proven to be bounded for any monotone Lagrangian submanifolds.

In this paper, we consider the following case: let $(\D,\omega)$ denote the closed unit disc in $\C$ equipped with the standard symplectic form, normalised so that the total area of the disc is $1$. Let $\Ham_c(\D,\omega)$ be the group of Hamiltonian diffeomorphisms of $(\D,\omega)$ supported in the interior of $\D$. Let $k\geq 2$, and $\underline L_0$ be a disjoint union of $k$ embedded smooth closed simple curves bounding discs of the same area $A>\frac 1 {k+1}$.
Then,
\begin{theorem}
\label{thm:unbdd}
    $d_H$ is unbounded on $\mathcal L(\underline L_0,\D,\omega)$.
\end{theorem}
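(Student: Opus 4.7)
The strategy is Khanevsky's ``quasimorphism trick'' recalled in Section \ref{sec:Khanevsky}: the theorem reduces to producing a homogeneous quasimorphism $\mu\colon\Ham_c(\D,\omega)\to\R$ with the following three properties: (i) $\mu$ is Lipschitz for the Hofer norm; (ii) $\mu$ is uniformly bounded on the stabilizer $\operatorname{Stab}(\underline L_0):=\{\psi\in\Ham_c(\D,\omega):\psi(\underline L_0)=\underline L_0\}$; (iii) the image of $\mu$ is unbounded in $\R$. Given such $\mu$, pick $\varphi_n\in\Ham_c(\D,\omega)$ with $\mu(\varphi_n)\to\infty$; then for any $\psi_n$ with $\psi_n(\underline L_0)=\varphi_n(\underline L_0)$ the element $\varphi_n^{-1}\psi_n$ lies in $\operatorname{Stab}(\underline L_0)$, and combining (i)--(ii) with the quasimorphism defect inequality forces $\|\psi_n\|\to\infty$, hence $d_H(\underline L_0,\varphi_n(\underline L_0))\to\infty$.

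Properties (i) and (iii) will be read off from Morabito's construction in \cite{M}: to a link of $k$ disjoint circles bounding discs of common area $A$ he associates a Hofer-Lipschitz homogeneous quasimorphism on $\Ham_c(\D,\omega)$, and the hypothesis $A>\frac{1}{k+1}$ is precisely the non-degeneracy condition making this quasimorphism non-trivial. Heuristically, the inequality $1-kA<\frac{1}{k+1}<A$ makes each inner disc strictly heavier than the outer region of $\D\setminus\underline L_0$, providing the positivity that feeds the spectral invariants underlying $\mu$.

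The heart of the matter is property (ii), and this is where Khanevsky's standard argument enters. Decompose $\D\setminus\underline L_0 = U_\infty\sqcup U_1\sqcup\cdots\sqcup U_k$ into the outer region $U_\infty$ of area $1-kA$ and the $k$ inner discs $U_j$ of area $A$. Any $\psi\in\operatorname{Stab}(\underline L_0)$ permutes $\{U_1,\ldots,U_k\}$ by one of finitely many permutations, so up to pre-composing with a Hamiltonian of uniformly bounded Hofer norm we may assume each $U_j$ is preserved set-wise. A further modification supported in a fixed thin collar of $\underline L_0$, again of uniformly bounded Hofer norm, makes $\psi$ the identity near $\underline L_0$; it then fragments as $\psi=\psi_\infty\circ\psi_1\circ\cdots\circ\psi_k$ with $\psi_j$ supported in $\overline{U_j}$. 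By the quasimorphism defect it suffices to bound $|\mu|$ uniformly on each subgroup $\Ham_c(U_j)$, $j\in\{\infty,1,\ldots,k\}$. This last bound is the remaining ingredient to extract from \cite{M}: the link spectral invariants underlying $\mu$ obey a Calabi-type property on Hamiltonians supported in a single complementary component of $\underline L_0$, and the inequality $A>\frac{1}{k+1}$ is what keeps the corresponding constants uniform.

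The principal obstacle I expect is precisely this last step---verifying from Morabito's construction that $\mu$ is uniformly bounded on each $\Ham_c(U_j)$. Once this is in hand, the rest of the proof is a faithful transcription of the template Khanevsky used in \cite{K09,K11}.
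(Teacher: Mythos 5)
Your reduction to Khanevsky's criterion is exactly the paper's starting point (Proposition \ref{prop:Khanevsky}), but your plan for the crucial property (ii) does not work, and its failure is precisely what forces the construction the paper actually uses. A single link quasimorphism $\mu$ coming from \cite{M} and \cite{CGHMSS22} is \emph{not} bounded on $\mathcal S(\underline L_0,\D,\omega)$, and no fragmentation argument can make it so. Two concrete obstructions. First, on Hamiltonians supported in a single complementary component $U_j$, the Calabi property of the link spectral invariants gives $\mu=\lambda_j\Cal$ with $\lambda_j$ generically nonzero, and $\Cal$ is an unbounded homomorphism on $\Ham_c(U_j)$ --- so the very subgroups your fragmentation reduces to already violate (ii). Second, and more fundamentally, the stabilizer carries an unbounded homomorphism, the linking number $\lk\circ b$ of the associated braid: powers of a cut-off full rotation preserve $\underline L_0$, are even supported away from it (inside $U_\infty$), yet have linking number growing linearly --- which also shows that being supported off the link does not trivialize the braid, so your fragmentation cannot ``see'' this invariant. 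Morabito's Theorem \ref{thm:Morabito} says exactly that the difference of two such quasimorphisms, built from embeddings into spheres of different areas, restricts on $\mathcal S(\underline L_0,\D,\omega)$ to a nonzero multiple of $\lk\circ b$; hence at least one of them is unbounded there. Your heuristic reading of $A>\frac{1}{k+1}$ is also off: that inequality is what makes the embedded link $\eta$-monotone for a positive range of sphere areas, i.e.\ what guarantees that \emph{several distinct} quasimorphisms exist, not a ``uniformity of constants'' in a Calabi bound.

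The paper's fix is to take four embeddings into spheres of areas $1+s_i$, producing quasimorphisms $\mu_1,\dots,\mu_4$ with monotonicity constants $\eta_1,\dots,\eta_4$, and to set $r=(\eta_4-\eta_3)(\mu_1-\mu_2)-(\eta_2-\eta_1)(\mu_3-\mu_4)$. By Theorem \ref{thm:Morabito} the linking-number contributions cancel identically, so $r$ vanishes (not merely stays bounded) on the stabilizer. Non-vanishing of $r$ is then checked not by exhibiting unboundedness of a single $\mu_i$ but by evaluating $r$ on one Hamiltonian concentrated near an auxiliary circle $C_{2A-2\eta_1}$, using the Lagrangian control property and the fact that the homogenized quasimorphism depends only on $k$ and $\eta$, not on the particular $\eta$-monotone link representing it. If you want to salvage your outline, replace ``find one $\mu$ bounded on the stabilizer'' by ``find a linear combination of the $\mu_i$ that kills the linking-number defect on the stabilizer''; the Khanevsky part of your argument then goes through unchanged.
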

The proof relies on the same strategy as Khanevsky, together with a result of Morabito. We explain those in the next Section, then move on to the proof of the main result in Section \ref{sec:proof}.

\subsection*{Acknowledgements}
The author would like to thank his PhD advisor Sobhan Seyfaddini for his support, as well as Patricia Dietzsch, Baptiste Serraille and Francesco Morabito for the insightful discussions about Khanevsky's results.

\section{Preliminaries}
\subsection{Hofer distance, Quasimorphisms and Khanevsky's argument}
\label{sec:Khanevsky}
We start by fixing notations and recalling the definition of the Hofer distance.

Let $(M,\omega)$ be a symplectic manifold. A \textit{Hamiltonian} on $M$ is a smooth function $H:S^1\times M\to R$. Any Hamiltonian induces a \textit{Hamiltonian vector field} $X_{H_t}$ uniquely defined by the equation $$\omega(X_{H_t},\cdot)=-dH_t$$
The time 1 map of the flow $\phi^t_H$ of $X_{H_t}$ is called the \textit{Hamiltonian diffeomorphism} generated by $H$.
We denote by $\Ham(M,\omega)$ the set of all Hamiltonian diffeomorphisms of $(M,\omega)$; it is in fact a group.
It can be equipped with the \textit{Hofer norm}, defined by:
$$\norm{\varphi}=\inf\limits_{H,\varphi=\phi^1_H} \int_{S^1}(\max\limits_M H_t-\min\limits_M H_t)dt$$

Before stating Khanevsky's argument, we need to recall the definition of a quasimorphism.
\begin{definition}
    Let $G$ be a group. A quasimorphism of $G$ is a map $q:G\to\R$ satisfying:
    $$\exists D>0,\forall g,h \in G, \abs{q(gh)-q(g)-q(h)}\leq D$$
    The infimum of all $D$ such that this property is satisfied is called the defect of $q$.
    Moreover, a quasimorphism $q$ is said to be homogeneous if for any $g$ in $G$ and integer $n$ in $\Z$, $q(g^n)=nq(g)$. 
    Given any quasimorphism, one can always homogenize it:
    Let $q:G\to\R$ be a quasimorphism. Define $\mu:G\to\R$ by the formula
    $$\mu(g) = \lim\limits_{n\to\infty}\frac {q(g^n)} n$$
    Then, $\mu$ is a well defined homogeneous quasimorphism.
\end{definition}

We are now ready to state Khanevsky's argument. Let $L_0$ be a Lagrangian submanifold of $(M,\omega)$. Denote by $\Ham_c(M,\omega)$ the subgroup of Hamiltonian diffeomorphisms that are compactly supported in the interior of $M$, and let $\mathcal L(L_0,M,\omega):=\{\varphi(L_0),\varphi\in\Ham_c(M,\omega)\}$. It is equipped with the Hofer distance:
$$d_H(L,L')=\inf\{\norm{\varphi}|\varphi\in\Ham(M,\omega),\varphi(L) = L'\}$$
Denote by $\mathcal S(L_0,M,\omega)$ the stabiliser of $L_0$ in $\Ham_c(M,\omega)$, i.e. the subgroup consisting of diffeomorphisms $\varphi$ such that $\varphi(L_0) = L_0$. Then, we have the following (\cite[Proposition 7.4]{K11}):

\begin{prop}[Khanevsky]
\label{prop:Khanevsky}
    Suppose there exists a non-vanishing homogeneous quasimorphism  $r$ on $Ham_c(M,\omega)$, Lipschitz with respect to the Hofer distance, and that vanishes on $\mathcal S(L_0,M,\omega)$. Then, $\mathcal L(L_0,M,\omega)$ has infinite diameter for the Hofer distance.
\end{prop}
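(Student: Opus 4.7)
The plan is to transfer the quasimorphism $r$ on $\Ham_c(M,\omega)$ to a coarsely Lipschitz, unbounded function on the orbit $\mathcal L(L_0,M,\omega)$. Let $D$ denote the defect of $r$ and $C$ its Hofer Lipschitz constant, so that $|r(\varphi)|\leq C\norm\varphi$ for every $\varphi\in\Ham_c(M,\omega)$ (using that a homogeneous quasimorphism necessarily vanishes at the identity). For each $L\in\mathcal L(L_0,M,\omega)$, I would choose once and for all a $\varphi_L\in\Ham_c(M,\omega)$ with $\varphi_L(L_0)=L$ and set $\tilde r(L):=r(\varphi_L)$. The hypothesis that $r$ vanishes on $\mathcal S(L_0,M,\omega)$ makes this assignment essentially canonical: if $\psi(L_0)=L$ as well, then $\varphi_L^{-1}\psi\in\mathcal S(L_0,M,\omega)$, so $r(\varphi_L^{-1}\psi)=0$, and the quasimorphism inequality applied to the factorisation $\psi=\varphi_L\cdot(\varphi_L^{-1}\psi)$ yields $|r(\psi)-\tilde r(L)|\leq D$.

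Next, I would establish a coarse Lipschitz bound of the form $|\tilde r(L')-\tilde r(L)|\leq 2D+C\,d_H(L,L')$. Given any $\eta\in\Ham_c(M,\omega)$ with $\eta(L)=L'$, the composition $\eta\varphi_L$ also sends $L_0$ to $L'$, so the previous paragraph gives $|\tilde r(L')-r(\eta\varphi_L)|\leq D$, while the quasimorphism inequality together with the Hofer Lipschitz bound yields $|r(\eta\varphi_L)-\tilde r(L)|\leq D+|r(\eta)|\leq D+C\norm\eta$. Taking the infimum over all admissible $\eta$ produces the estimate.

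Finally, since $r$ is non-vanishing I would pick $\psi\in\Ham_c(M,\omega)$ with $r(\psi)\neq 0$ and put $L_n:=\psi^n(L_0)$. Choosing $\varphi_{L_n}:=\psi^n$, homogeneity gives $\tilde r(L_n)=n\,r(\psi)$, whereas the coarse Lipschitz estimate applied to $L$ and $L_n$ forces
\[
d_H(L_0,L_n)\;\geq\;\frac{n|r(\psi)|-2D}{C},
\]
which tends to infinity. This proves that $(\mathcal L(L_0,M,\omega),d_H)$ has infinite diameter.

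I do not anticipate any real obstacle: the argument is entirely formal, and each of the three hypotheses on $r$ (vanishing on the stabiliser, Hofer Lipschitz, non-vanishing homogeneous) is used in exactly one of the three steps above. The only delicate point is keeping track of the order of composition when applying the quasimorphism inequality, since $d_H$ is defined via the asymmetric relation $\eta(L)=L'$; this is pure bookkeeping.
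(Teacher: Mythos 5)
Your proposal is correct and follows essentially the same route as the paper: both arguments use the vanishing of $r$ on the stabiliser together with the quasimorphism inequality to show that $r$ descends, up to the defect, to a coarsely Lipschitz function on the orbit, and then use homogeneity to make that function grow linearly along $\psi^n(L_0)$. Your version merely packages this as an explicit induced map $\tilde r$ on $\mathcal L(L_0,M,\omega)$, and is in fact slightly more careful than the paper in carrying the Hofer--Lipschitz constant $C$ through the estimate rather than implicitly taking it to be $1$.
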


\begin{proof}
    Let $\varphi\in\Ham_c(M,\omega)$ be such that $r(\varphi)\neq 0$. Let $D$ be the defect of $r$. For $n$ a positive integer, let $L_n:=\varphi^n(L_0)$.
    Then, $d_h(L_0,L_n)=\inf\{\norm{\psi}|\psi(L_0)=L_n\}$. Let $\psi\in\Ham(M,\omega)$ be such that $\psi(L_0)=L_n=\varphi^n(L_0)$. Then, $\varphi^{-n}\psi(L_0)=L_0$, i.e. $\varphi^{-n}\psi\in\mathcal S(L_0,M,\omega)$. Therefore, $r(\varphi^{-n}\psi)=0$ and:
    \begin{align*}
        \norm{\psi}&\geq \lvert r(\psi) \rvert \text{ since $r$ is Hofer-Lipschitz}\\
        &\geq \lvert r(\psi)-r(\varphi^{-n}\psi)\rvert\\
        &\geq \lvert r(\varphi^{-n})\rvert - \lvert r(\varphi^{-n}+r(\psi)-r(\varphi^{-n}\psi)\rvert\\
        &\geq n \lvert r(\varphi)\rvert - D \text{ since $r$ is a homogeneous quasimorphism of defect $D$}
    \end{align*}
    Taking the infimum over $\psi$, we get $d_H(L_0,L_n)\geq n \lvert r(\varphi)\rvert - D$. Since $r(\varphi)\neq 0$, it shows that $\mathcal L(L_0,M,\omega)$ has infinite diameter.
\end{proof}

\subsection{Morabito's result}
\label{sec:Morabito}

Let $(\D,\omega)$ denote the closed unit disc in $\C$ equipped with the standard symplectic form, normalised so that the total area of the disc is $1$. Let $\Ham_c(\D,\omega)$ be the group of Hamiltonian diffeomorphisms of $(\D,\omega)$ supported in the interior of $\D$. Let $k\geq 2$, and $\underline L_0$ be a disjoint union of $k$ embedded smooth closed simple curves $(L_0^i)_{1\leq i\leq k}$ bounding discs of the same area $A>\frac 1 {k+1}$.

Let $\varphi$ be a Hamiltonian diffeomorphism in $\mathcal S(\underline L_0, \D, \omega)$. Then, there exists a permutation $\sigma\in \mathfrak S_k$ such that for $1\leq i \leq k$, $\varphi(L_0^i)=L_0^{\sigma(i)}$. By choosing a Hamiltonian isotopy $\varphi_t$ from the identity to $\varphi$, one can associate a braid with $k$ strands to $\varphi$. This construction does not depend on the choice of isotopy, and therefore defines a map $b$ from $\mathcal S(\underline L_0, \D, \omega)$ to $\mathcal B_k$, the group of braids with $k$ strands.

For $i=1,2$, consider a symplectic embedding $\Phi_i$ of the disc $(\D,\omega)$ into a two-sphere $\mathbb S_i$ of area $1+s_i$, where $s_1,s_2$ are two different points of the interval $(0,(k+1)A-1]$.

Denote by $\underline L_{0,i}$ the image of $\underline L_0$ by $\Phi_i$. Then, $\underline L_{0,i}$ is an $\eta_i$-monotone Lagrangian link on $\mathbb S_i$ (in the sense of \cite[Definition 1.12]{CGHMSS22}), where $\eta_i=\frac {(k+1)A-1-s_i}{2(k-1)}$.

Therefore, the construction in \cite{CGHMSS22} provides us with a well defined spectral invariant $c_{\underline L_{0,i}}:\widetilde{\Ham(\mathbb S_i)}\to\R$, whose pullback by $\Phi_i$ descends to a Hofer-Lipschitz quasimorphism on $\Ham_c(\D,\omega)$, that we still denote by $c_{\underline L_{0,i}}$. Let $\mu_i$ be its homogenization.

Then, Morabito proves the following in \cite{M}:

\begin{theorem}[Morabito]
\label{thm:Morabito}
    For any $\varphi$ in $\mathcal S(\underline L_0,\D,\omega)$, we have:
    $$\mu_1(\varphi)-\mu_2(\varphi)=\frac{\eta_2-\eta_1}{2k}\lk(b(\varphi))$$
    where $\lk$ is the linking number of a braid.
\end{theorem}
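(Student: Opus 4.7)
My plan exploits the structural features of both sides of the asserted identity. The map $\lk\circ b$ is a genuine group homomorphism from $\mathcal{S}(\underline L_0, \D, \omega)$ to $\R$, since $b$ is a homomorphism into the braid group $\mathcal B_k$ and the linking number is additive on braids. The left-hand side $\mu_1-\mu_2$, being a difference of homogeneous quasimorphisms, is itself a homogeneous quasimorphism. So the task reduces to proving that this quasimorphism is actually a homomorphism, and that it equals $\frac{\eta_2-\eta_1}{2k}\lk(b(\cdot))$.

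To approach this, I would unpack the definition of $c_{\underline L_{0,i}}$ as a Lagrangian spectral invariant for the monotone link $\underline L_{0,i}\subset\mathbb S_i$. By $\eta_i$-monotonicity, the symplectic area of any disc capping a chord with boundary on $\underline L_{0,i}$ equals $\eta_i$ times its Maslov index, so the dependence of the action functional on $\eta_i$ is controlled entirely by a Maslov/winding contribution of cappings. For $\varphi\in\mathcal S(\underline L_0)$ generated by a Hamiltonian $H$, the Hamiltonian chords beginning and ending on $\underline L_0$ correspond to the strands of $b(\varphi)$, and their relative winding data is exactly what the braid linking number records.

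The core computation I would carry out is as follows: given $\varphi\in\mathcal S(\underline L_0)$, replace $\varphi$ by a large power $\varphi^N$ so that the induced permutation becomes trivial and the braid is pure. Choose a Hamiltonian generating $\varphi^N$ that is the identity near $\underline L_0$ after unwinding the braid, and compute the action spectrum of the associated Floer generators inside both $\mathbb S_1$ and $\mathbb S_2$. The terms independent of $i$ cancel in the difference $c_{\underline L_{0,1}}(\varphi^N)-c_{\underline L_{0,2}}(\varphi^N)$, leaving a contribution of the form $(\eta_2-\eta_1)\cdot c\cdot\lk(b(\varphi^N))$ for a combinatorial constant $c$. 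Dividing by $N$ and passing to the limit yields the formula for $\mu_1-\mu_2$. The coefficient $\frac{1}{2k}$ can then be pinned down by testing on a single explicit braid, for instance a full twist exchanging two components of the link.

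The principal obstacle will be the computational step that links the spectral invariant to a specific family of Floer generators. One must show that the minimax defining $c_{\underline L_{0,i}}$ is controlled by chords whose $\eta_i$-dependence is exactly the braid winding, and that all other action contributions cancel between the two embeddings. This requires a careful reading of the CGHMSS22 construction in the presence of a link-preserving Hamiltonian, together with a precise matching of cappings across $\Phi_1$ and $\Phi_2$. The cleanest route is probably to treat $\eta$ as a continuous parameter, differentiate $c_{\underline L_{0,i}}$ with respect to it, and identify the derivative as a fixed multiple of $\lk(b(\varphi))$ via a direct area calculation on the capping data of the Floer chains.
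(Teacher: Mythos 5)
You should first note that the paper does not prove this statement at all: Theorem \ref{thm:Morabito} is imported wholesale from Morabito's paper \cite{M} (``Morabito proves the following in \cite{M}''), so there is no internal argument to compare your proposal against. Any assessment therefore has to be of your sketch on its own terms, measured against what a proof of this identity would actually require.

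On those terms, your structural observations are correct and genuinely relevant: $\lk\circ b$ is a homomorphism $\mathcal S(\underline L_0,\D,\omega)\to\R$, $\mu_1-\mu_2$ is a homogeneous Hofer--Lipschitz quasimorphism, and passing to powers $\varphi^N$ to reduce to pure braids is the right normalization for a homogenized invariant. But the core of your argument --- that in the difference $c_{\underline L_{0,1}}(\varphi^N)-c_{\underline L_{0,2}}(\varphi^N)$ all $\eta$-independent contributions cancel and what survives is a fixed multiple of $\lk(b(\varphi^N))$ --- is asserted, not established, and it is the entire mathematical content of the theorem. The invariants $c_{\underline L_{0,i}}$ are min-max values, so there is no a priori reason the minimax in $\mathbb S_1$ and the minimax in $\mathbb S_2$ are achieved on ``matching'' generators whose actions differ only through the capping term; showing that the difference of two min-max quantities behaves like a difference of actions of corresponding chords is exactly the hard analytic step you defer to ``a careful reading of the CGHMSS22 construction.'' Likewise, pinning down the coefficient $\tfrac{1}{2k}$ by testing one explicit braid is only legitimate after you have proved that $(\mu_1-\mu_2)\vert_{\mathcal S}$ is proportional to $\lk\circ b$ with a constant independent of $\varphi$; your sketch does not supply that proportionality, since a homogeneous quasimorphism on $\mathcal S(\underline L_0,\D,\omega)$ need not factor through $b$, let alone through the abelianization of its image in $\mathcal B_k$. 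So the proposal is a plausible roadmap but has a genuine gap at its central step; closing it is precisely what requires the machinery of \cite{M} (and the properties of the link spectral invariants of \cite{CGHMSS22}, such as Lagrangian control, that the present paper invokes elsewhere).
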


\section{Proof of the main result}
\label{sec:proof}
Putting together Khanevsky's statement and Morabito's result, we can now give a proof of Theorem \ref{thm:unbdd}.

\begin{proof}[Proof of Theorem \ref{thm:unbdd}]
    We want to prove that $d_H$ is unbounded on $\mathcal L(\underline L_0,\D,\omega)$.
    By Khanevsky's statement (Proposition \ref{prop:Khanevsky}), we only need to construct a non-vanishing, homogeneous, Hofer-Lipschitz quasimorphism $r:\Ham_c(\D,\omega)\to\R$ which vanishes on $\mathcal S(\underline L_0,\D,\omega)$. 

    For $i=1,...,4$, consider a symplectic embedding $\Phi_i$ of the disc $(\D,\omega)$ into a sphere $\mathbb S_i$ of area $1+s_i$, where $s_1,...,s_4$ are four different points of the interval $(0,(k+1)A-1]$.

    Following the same construction as in Section \ref{sec:Morabito}, we end up with four Hofer-Lipschitz, homogeneous quasimorphisms $\mu_1,...,\mu_4$ on $\Ham_c(\D,\omega)$.

    Then, by Morabito's Theorem \ref{thm:Morabito}, for $i\neq j$, and $\varphi\in\mathcal S(\underline L_0,\D,\omega)$, we have:
    $$\mu_i(\varphi)-\mu_j(\varphi)=\frac{\eta_j-\eta_i}{2k}\lk(b(\varphi))$$
    
    Let $r:=(\eta_4-\eta_3)(\mu_1-\mu_2)-(\eta_2-\eta_1)(\mu_3-\mu_4)$.
    Then, $r$ is a homogeneous, Hofer-Lipschitz quasimorphism on $\Ham_c(\D,\omega)$, which vanishes on $\mathcal S$.
    
    It only remains to show that $r$ is not identically zero.

    By \cite[Theorem 7.7 (i)]{CGHMSS22}, the homogenized quasimorphisms defined using $\eta$-monotone links only depend on the number of components and the constant $\eta$. Therefore, $r=(\eta_4-\eta_3)(\mu_{\underline L'_{1}}-\mu_{\underline L'_{2}})-(\eta_2-\eta_1)(\mu_{\underline L'_{3}}-\mu_{\underline L'_{4}})$, where, for $i=1,...,4$, $\underline L'_{i}$ is any choice of $\eta_i$-monotone link with $k$ components in $\mathbb S_i$.
    
    We now present such a choice. For $a\in(0,1]$, let $C_a\subset \D$ be the circle of radius $\sqrt a$ centered at the origin (so that it bounds a disc of area $a$).
    Then, we can complete the images of the two concentric circles $C_A$ and $C_{2A-2\eta_i}$ by the embedding $\Phi_i$ into an $\eta_i$-monotone link with $k$ (nested) components $\underline L'_i$.
    Since the $C_{2A-2\eta_i}$, $i=1,...,4$ are disjoint, we can construct a Hamiltonian $H$ on $\D$, equal to $1$ on $C_{2A-2\eta_1}$, and supported in a small neighbourhood of it so that by Lagrangian control (\cite[Theorem 7.6]{CGHMSS22}), $r(\varphi_H)=\frac {\eta_4-\eta_3}{k}\neq 0$.
    
\end{proof}

\begin{remark}
    The reason we need at least two circle components in our link to prove that $d_H$ is unbounded is because we want to be able to embed the disc into spheres of different areas to get $\eta$-monotone links with different parameters $\eta$. If we have a single circle $L_0$ inside the disc bounding a disc of area $A>\frac 1 2$, then the only way for it to be monotone after embedding the disc into a sphere is to choose a sphere of area $2A$, and therefore our strategy cannot produce a quasimorphism satisfying Khanevsky's conditions.
    
    Another idea could be to embed the disc into spheres $\mathbb S_i$ of area $(i+1)A$ for different integers $i\geq 2$, then consider monotone links $\underline L_i$ consisting of the image of $L_0$ and $(i-1)$ parallel circles such that the area of each connected component of the complement of $L_i$ is equal to $A$. Then, the quasimorphisms $\mu_i := \mu_{\underline L_i}$ would be Hofer-Lipschitz homogeneous quasimorphisms on $\Ham_c(\D,\omega)$, and one could easily build linear combinations that vanish on $\mathcal S(L_0,\D,\omega)$. However, we conjecture that such a linear combination is always identically zero, and therefore does not produce a quasimorphism satisfying Khanevsky's conditions. This is related to \cite[Question 4.2]{BFPS} about whether some linear combinations of quasimorphisms defined on $\Ham(S^2)$ identically vanish or not.
\end{remark}

 \bibliographystyle{alpha}
 \bibliography{biblio}

\end{document}